\newtheorem{theorem}{Theorem}
\newtheorem{lemma}[theorem]{Lemma}
\newtheorem{rem}[theorem]{Remark}
\numberwithin{equation}{section}
\numberwithin{theorem}{section}
\numberwithin{table}{section}
\numberwithin{figure}{section}
\newfont{\teneufm}{eufm10}
\newfont{\seveneufm}{eufm7}
\newfont{\fiveeufm}{eufm5}
\def\bbbc{{\mathchoice {\setbox0=\hbox{$\displaystyle\rm C$}\hbox{\hbox
to0pt{\kern0.4\wd0\vrule height0.9\ht0\hss}\box0}}
{\setbox0=\hbox{$\textstyle\rm C$}\hbox{\hbox
to0pt{\kern0.4\wd0\vrule height0.9\ht0\hss}\box0}}
{\setbox0=\hbox{$\scriptstyle\rm C$}\hbox{\hbox
to0pt{\kern0.4\wd0\vrule height0.9\ht0\hss}\box0}}
{\setbox0=\hbox{$\scriptscriptstyle\rm C$}\hbox{\hbox
to0pt{\kern0.4\wd0\vrule height0.9\ht0\hss}\box0}}}}
\def\bbbq{{\mathchoice {\setbox0=\hbox{$\displaystyle\rm
Q$}\hbox{\raise 0.15\ht0\hbox to0pt{\kern0.4\wd0\vrule
height0.8\ht0\hss}\box0}} {\setbox0=\hbox{$\textstyle\rm
Q$}\hbox{\raise 0.15\ht0\hbox to0pt{\kern0.4\wd0\vrule
height0.8\ht0\hss}\box0}} {\setbox0=\hbox{$\scriptstyle\rm
Q$}\hbox{\raise 0.15\ht0\hbox to0pt{\kern0.4\wd0\vrule
height0.7\ht0\hss}\box0}} {\setbox0=\hbox{$\scriptscriptstyle\rm
Q$}\hbox{\raise 0.15\ht0\hbox to0pt{\kern0.4\wd0\vrule
height0.7\ht0\hss}\box0}}}}
\def\bbbt{{\mathchoice {\setbox0=\hbox{$\displaystyle\rm
T$}\hbox{\hbox to0pt{\kern0.3\wd0\vrule height0.9\ht0\hss}\box0}}
{\setbox0=\hbox{$\textstyle\rm T$}\hbox{\hbox
to0pt{\kern0.3\wd0\vrule height0.9\ht0\hss}\box0}}
{\setbox0=\hbox{$\scriptstyle\rm T$}\hbox{\hbox
to0pt{\kern0.3\wd0\vrule height0.9\ht0\hss}\box0}}
{\setbox0=\hbox{$\scriptscriptstyle\rm T$}\hbox{\hbox
to0pt{\kern0.3\wd0\vrule height0.9\ht0\hss}\box0}}}}
\def\bbbs{{\mathchoice
{\setbox0=\hbox{$\displaystyle     \rm S$}\hbox{\raise0.5\ht0\hbox
to0pt{\kern0.35\wd0\vrule height0.45\ht0\hss}\hbox
to0pt{\kern0.55\wd0\vrule height0.5\ht0\hss}\box0}}
{\setbox0=\hbox{$\textstyle        \rm S$}\hbox{\raise0.5\ht0\hbox
to0pt{\kern0.35\wd0\vrule height0.45\ht0\hss}\hbox
to0pt{\kern0.55\wd0\vrule height0.5\ht0\hss}\box0}}
{\setbox0=\hbox{$\scriptstyle      \rm S$}\hbox{\raise0.5\ht0\hbox
to0pt{\kern0.35\wd0\vrule height0.45\ht0\hss}\raise0.05\ht0\hbox
to0pt{\kern0.5\wd0\vrule height0.45\ht0\hss}\box0}}
{\setbox0=\hbox{$\scriptscriptstyle\rm S$}\hbox{\raise0.5\ht0\hbox
to0pt{\kern0.4\wd0\vrule height0.45\ht0\hss}\raise0.05\ht0\hbox
to0pt{\kern0.55\wd0\vrule height0.45\ht0\hss}\box0}}}}
\def\bbbz{{\mathchoice {\hbox{$\sf\textstyle Z\kern-0.4em Z$}}
{\hbox{$\sf\textstyle Z\kern-0.4em Z$}} {\hbox{$\sf\scriptstyle
Z\kern-0.3em Z$}} {\hbox{$\sf\scriptscriptstyle Z\kern-0.2em
Z$}}}}
\def\squareforqed{\hbox{\rlap{$\sqcap$}$\sqcup$}}
\def\qed{\ifmmode\squareforqed\else{\unskip\nobreak\hfil
\penalty50\hskip1em\null\nobreak\hfil\squareforqed
\parfillskip=0pt\finalhyphendemerits=0\endgraf}\fi}
\def\cD{{\mathcal D}}
\def\cG{{\mathcal G}}
\def\cH{{\mathcal H}}
\def\cI{{\mathcal I}}
\def\cJ{{\mathcal J}}
\def\cM{{\mathcal M}}
\def\cR{{\mathcal R}}
\def\cW{{\mathcal W}}
\def\cX{{\mathcal X}}
\def\le{\leqslant}
\def\ge{\geqslant}
\newcommand{\ignore}[1]{}
\def\e{\mathbf{e}}
\def \F{\mathbb{F}}
\def\mand{\qquad\mbox{and}\qquad}
\def\\{\cr}
\def\({\left(}
\def\){\right)}
\def\fl#1{\left\lfloor#1\right\rfloor}
\def\e{{\mathbf{\,e}}}
\def\ep{{\mathbf{\,e}}_p}
\def\dsum{\mathop{\quad \sum \ \sum}}
\begin{document}

\title[Binary and ternary congruences]{Binary and ternary congruences involving intervals and sets modulo a prime}

\author[M.~Z.~Garaev]{Moubariz~Z.~Garaev}
\address{Centro  de Ciencias Matem{\'a}ticas,  Universidad Nacional Aut\'onoma de
M{\'e}\-xico, C.P. 58089, Morelia, Michoac{\'a}n, M{\'e}xico}
\email{garaev@matmor.unam.mx}

\author[J. C. Pardo] {Julio C. Pardo}
\address{Centro  de Ciencias Matem{\'a}ticas,  Universidad Nacional Aut\'onoma de
M{\'e}\-xico, C.P. 58089, Morelia, Michoac{\'a}n, M{\'e}xico}
\email{jcpardo@matmor.unam.mx}

\author[I. E. Shparlinski] {Igor E. Shparlinski} \thanks{I.S. was  partially supported by ARC Grants
DP230100530 and DP230100534.}
\address{School of Mathematics and Statistics, University of New South Wales, Sydney, NSW 2052, Australia}
\email{igor.shparlinski@unsw.edu.au}

\begin{abstract} 
Let $s$ be a fixed positive integer constant, $\varepsilon$ be a fixed small positive number.
Then, provided that  a prime $p$ is large enough, we prove that for any set  $\cM\subseteq \F_p^*$ 
of size $|\cM|= \fl{ p^{14/29}}$ and integer $H=\fl{p^{14/29+\varepsilon}}$, any integer $\lambda$ can be represented in the form
$$
\frac{m_1}{x_1^s}+\frac{m_2}{x_2^s}+\frac{m_3}{x_3^s}\equiv \lambda \bmod p,
$$
with
$$
 m_i\in \cM, \quad 1\le x_i\le H, \qquad i=1,2,3.
$$

When $s=1$ we  show that for almost all primes $p$ the following holds:
if $|\cM|= \lfloor p^{1/2}\rfloor$ and $H=\lfloor p^{1/2}(\log p)^{6+\varepsilon}\rfloor$, then any integer 
$\lambda$ can be represented in the form
$$
\frac{m_1}{x_1}+\frac{m_2}{x_2}\equiv \lambda \bmod p,
$$
with
$$
 m_i\in \cM, \quad 1\le x_i\le H, \qquad i=1,2.
$$
\end{abstract}  

\keywords{products of intervals and sets, congruences, exponential sums, character sums}
\subjclass[2020]{11D79, 11L07, 11L26}

\maketitle

\section{Introduction}

\subsection{Motivation} 
The problem of distribution of elements of sets involving products and sums of intervals and arbitrary sets in finite fields has been a subject of 
extensive investigation,
see, for example~\cite{BagSh,BaSh, Gar1, GG1, GK1, GSh1,Shp1} and the references therein. 
Here we continue studying this problem and consider some question related to binary and ternary congruences modulo  a sufficiently large prime number $p$, which involve products and inverses of  elements from short intervals or arbitrary sets. 
 
Let $\F_p$ denote the finite field of residue classes modulo $p$.  Throughout the paper, we always assume that $\F_p$ is represented
by the set $\{0, 1, \ldots, p-1\}$ and we freely alternate between equations
in $\F_p$ and congruences modulo $p$. 
We also use  $\varepsilon>0$ to be an arbitrarily small real number.

Let $\cM$ be an arbitrary subset of $\F_p^*:=\F_p\setminus\{0\}$ with $M=\#\cM$ elements,  $\cX\subseteq \F_p^*$ be an arbitrary interval of length $H$ and $s$ be a  fixed positive integer. It is known from~\cite{GSh1} that if  
$$
H>  p^{17/35+\varepsilon}, \quad M > p^{17/35+\varepsilon}.
$$  
then the number $T_6(\lambda)$ of solutions of the congruence
\begin{align*}
\frac{m_1}{x_1^s}+ \frac{m_2}{x_2^s}+ \frac{m_3}{x_3^s}&+\frac{m_4}{x_4^s}+ \frac{m_5}{x_5^s}+\frac{m_6}{x_6^s}
\equiv \lambda\mod  p, \\
  m_i\in \cM, & \ x_i  \in \cX, \quad i =1, \ldots, 6,
\end{align*}
satisfies
$$
T_6(\lambda)=\frac{H^6 M^6}{p}\(1+O\(p^{-\delta}\)\),
$$
where  $\delta> 0$ depends only on $s$ and $\varepsilon$. It has been posed an open question about obtaining an asymptotic formula for the $5$-term analogues of $T_6(\lambda)$ for  $H= M = o\(p^{1/2}\)$. 
 
\subsection{Main results} 
The first result of our present paper deals with the solubility problem of the corresponding congruence with three summands, assuming that the set $\cX$ starts from the origin. Thus, although our result does not provide asymptotic formula, we can still prove existence of solutions. 

In what follows we use $\cH$ to denote the set of the first $H$ positive integers:
$$
\cH=\{1,2,\ldots, H\}.
$$
We also recall that $\cM$ is an arbitrary subset of $\F_p^*$ with $M=\#\cM$ elements.

\begin{theorem}
\label{thm:three summands}
Let  $M= \lfloor p^{14/29}\rfloor$ and 
$H=\lfloor p^{14/29+\varepsilon}\rfloor $. Then any integer $\lambda$ can be represented in the form
$$
\frac{m_1}{x_1^s}+\frac{m_2}{x_2^s}+\frac{m_3}{x_3^s}\equiv \lambda \bmod p,
\qquad m_i\in \cM, \ x_i\in\cH, \quad i=1,2,3.
$$
\end{theorem}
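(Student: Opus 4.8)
The plan is to use the circle method. For $t\in\F_p$ put
$$
S(t)=\sum_{m\in\cM}\sum_{x\in\cH}e_p\!\left(\frac{tm}{x^{s}}\right),\qquad e_p(z)=\exp(2\pi iz/p),
$$
so that the number $T_{3}(\lambda)$ of representations of $\lambda$ equals
$$
T_{3}(\lambda)=\frac1p\sum_{t=0}^{p-1}e_p(-t\lambda)\,S(t)^{3}
=\frac{M^{3}H^{3}}{p}+\frac1p\sum_{t=1}^{p-1}e_p(-t\lambda)\,S(t)^{3}.
$$
Since $T_{3}(\lambda)$ is a non-negative integer, it suffices to prove that the sum over $t\ne0$ has modulus strictly less than $M^{3}H^{3}/p$, which then forces $T_{3}(\lambda)\ge1$.

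I would estimate the cube by pulling out one factor,
$$
\Big|\sum_{t=1}^{p-1}e_p(-t\lambda)\,S(t)^{3}\Big|\le\Big(\max_{1\le t\le p-1}|S(t)|\Big)\sum_{t=1}^{p-1}|S(t)|^{2},
$$
so that the problem reduces to a mean-square bound and a pointwise bilinear bound. For the mean square, orthogonality gives $\sum_{t=0}^{p-1}|S(t)|^{2}=pW$, where $W$ is the number of solutions of $m_{1}x_{2}^{s}\equiv m_{2}x_{1}^{s}\pmod p$, $m_{i}\in\cM$, $x_{i}\in\cH$. Writing $W=\sum_{r}\nu_{\cM}(r)\rho_{\cH}(r)$ with $\nu_{\cM},\rho_{\cH}$ the multiplicative representation functions of $\cM$ and of $\{x^{s}:x\in\cH\}$, isolating the diagonal $r=1$ (which contributes $O_{s}(MH\,p^{o(1)})$, since an $s$-th root of unity modulo $p$ cannot be abnormally often a ratio of two elements of a short interval), and bounding the rest by Cauchy--Schwarz and the divisor estimate $E^{\times}(\{x^{s}:x\in\cH\})\ll_{s}H^{2}\log H$, one obtains $W\ll_{s}M^{3/2}H\,p^{o(1)}$; if the multiplicative energy of $\cM$ is close to the extremal value $M^{3}$ then $\cM$ is close to a coset of a geometric progression or of a multiplicative subgroup, and bounding how many of its elements can lie in a short interval sharpens this to $W\ll_{s}MH\,p^{o(1)}$.

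The crux is a non-trivial bound for $\max_{t\ne0}|S(t)|$. Writing $S(t)=\sum_{x\in\cH}\widehat{\mathbf 1_{\cM}}(tx^{-s})$ with $\widehat{\mathbf 1_{\cM}}(b)=\sum_{m\in\cM}e_p(bm)$, the trivial bound is $|S(t)|\le H\max_{b\ne0}|\widehat{\mathbf 1_{\cM}}(b)|\le MH$, and --- since $H$ may lie below $p^{1/2}$ --- neither completing the inner sum and applying the Weil bound to $\sum_{y\in\F_p^{*}}e_p(ay^{-s}+hy)$, nor Cauchy--Schwarz in either variable, improves on it. What is needed is a genuine power saving $|S(t)|\ll_{s}(MH)^{1-\delta}$ for some fixed $\delta>0$, obtained by a Burgess--Bourgain type amplification that exploits the fact that $\{x^{-s}:x\in\cH\}$ is the dilated image of an interval: an iterated Cauchy--Schwarz (or high-moment) argument in which the interval structure is used repeatedly and the resulting complete sums are controlled by the bounds of Weil and Deligne, sharpening the methods of \cite{GSh1}. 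If a uniform treatment is preferred, this is combined with the usual dichotomy: either $\widehat{\mathbf 1_{\cM}}$ is small at all non-zero frequencies, so that $|S(t)|\le H\max_{b\ne0}|\widehat{\mathbf 1_{\cM}}(b)|$ is already small enough, or $\cM$ is concentrated on a long arithmetic progression and the congruence, after a dilation turning $\cM$ essentially into an interval, is handled by the sharper bilinear estimates available in that case.

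Putting the pieces together, the error term is $\ll_{s}\big(\max_{t\ne0}|S(t)|\big)\,W$, and with $M=\fl{p^{14/29}}$, $H=\fl{p^{14/29+\varepsilon}}$ one checks that this is $<M^{3}H^{3}/p$; the admissible size of $\delta$, balanced against the bound for $W$, is exactly what pins down the exponent $14/29$, while the extra factor $p^{\varepsilon}$ in $H$ absorbs the $p^{o(1)}$ losses. This yields $T_{3}(\lambda)>0$ for every integer $\lambda$. The step I expect to be the main obstacle is the bilinear (Burgess--Bourgain) estimate for $S(t)$ with a power saving valid down to $M,H\approx p^{14/29}$; the mean-square bound for $W$ and the handling of multiplicatively or additively structured sets $\cM$ are the supporting difficulties.
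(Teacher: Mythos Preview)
Your circle-method setup and the plan to pull out one factor of $S(t)$ and bound the remaining mean square is natural, but there is a real gap at the mean square. You need $\sum_{t}|S(t)|^{2}=pW$ with $W\lesssim MH$ for the numerics at $M\approx H\approx p^{14/29}$ to close; your Cauchy--Schwarz step gives only $W\ll E^{\times}(\cM)^{1/2}\,E^{\times}(\{x^{s}:x\in\cH\})^{1/2}\ll M^{3/2}H\,p^{o(1)}$, too large by a factor $M^{1/2}\approx p^{7/29}$. The proposed rescue (``if $E^{\times}(\cM)\approx M^{3}$ then $\cM$ is near a coset and one bounds how many of its elements lie in a short interval'') does not apply: $\cM$ is an \emph{arbitrary} subset of $\F_p^{*}$ and need not lie in any interval, so that structural information gives no grip on the intersections $|\cM\cap (x_1/x_2)^{s}\cM|$. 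The additive dichotomy on $\widehat{\mathbf 1_{\cM}}$ is likewise orthogonal to the difficulty, which is multiplicative. So the bound $W\lesssim MH$ remains unsupported.

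The paper avoids this entirely by changing the counting problem. It passes to the product \emph{set} $\cD=\cM\cdot\cH^{-s}$ (forgetting multiplicities) and instead counts solutions of
\[
d_1+d_2+\frac{m}{x^{s}}\equiv\lambda\pmod p,\qquad d_1,d_2\in\cD,\ m\in\cM,\ x\in\cH.
\]
Now the two ``squared'' factors are $\sum_{d\in\cD}e_p(ad)$, and orthogonality gives the exact identity $\sum_{a}\bigl|\sum_{d\in\cD}e_p(ad)\bigr|^{2}=p|\cD|$; the burden shifts to a \emph{lower} bound on $|\cD|$, not an upper bound on $W$. Lemma~\ref{lem:product small interval} supplies $|\cD|\gg MH/\log p$ by restricting $x$ to primes in $\cH$ (for distinct primes, $q_1/q_2$ determines $(q_1,q_2)$, so collisions are essentially diagonal). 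This is strictly easier than what you attempt: since $W\ge(MH)^{2}/|\cD|$ always, the paper's condition $\max|S|<|\cD|HM/p$ is never stronger than yours. For the remaining factor, the pointwise bound you need is exactly Lemma~\ref{lem: KloostFrac} (that is, \cite[Theorem~1.2]{GSh1}) with $\ell=3$, used as a black box---no sharpening of~\cite{GSh1} is required. The resulting error-to-main ratio is $p^{7/6}/(|\cD|M^{1/6}H^{1/4})\lesssim p^{7/6}/(M^{7/6}H^{5/4})$, and this is $<1$ precisely for $M,H>p^{14/29}$, which is where the exponent comes from.
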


We note that the problem of obtaining an asymptotic formula for the number of representations
of $\lambda$ in the form given by Theorem~\ref{thm:three summands} is still an open problem.

Let us now consider the problem of solubility of the corresponding congruence with two summands, with $M$ and $H$ of the critical size $p^{1/2+o(1)}$. The case $s\ge 2$ turns to be very hard and currently we can not say much about it. Hence, here we  restrict ourselves to the case $s=1$. 

If $\cM$ is a set of $M$ consecutive integers, then the question is reduced to the problem of ratio of two intervals. This problem was considered in~\cite{Gar1, GG1, GK1}. In this case it is known from the proof of~\cite[Theorem~1.7]{GK1} and~\cite[Lemma~5]{GG1} that if $M>cp^{1/2}, H>cp^{1/2}$, then for some suitable positive constant $c$, any integer $\lambda$ 
can be represented in the form
$$
\frac{m_1}{x_1}+\frac{m_2}{x_2}\equiv \lambda \bmod p,
$$
with $m_1, m_2\in \cM$ and $x_1, x_2\in\cH$. However, the method of the aforementioned works dooes not apply for arbitrary sets $\cM$. 

Our next result deals with binary congruence for arbitrary sets $\cM\subseteq \F_p^*$
for  almost all prime modulus $p$.  As usual, by $\pi(X)$  we denote the number of primes not exceeding $X$.

\begin{theorem}
\label{thm:two summands}
Let $0<\varepsilon <0.1$, $M= \lfloor p^{1/2}\rfloor$ and $H=\lfloor p^{1/2}(\log p)^{6+\varepsilon}\rfloor$. Then for all but $\pi(X)/X^{0.01\varepsilon}$ primes $p\le X$,  any integer 
$\lambda$ can be represented in the form
$$
\frac{m_1}{x_1}+\frac{m_2}{x_2}\equiv \lambda \bmod p,\qquad
 m_i\in \cM, \ x_i\in\cH, \quad i=1,2.
$$
\end{theorem}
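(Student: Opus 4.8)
The plan is to run a second-moment (circle-method) argument and reduce everything to a bound on a multiplicative energy. Write $e_p(z)=e^{2\pi i z/p}$, let $x^{-1}$ denote the inverse of $x$ modulo $p$, and set $S(t)=\sum_{m\in\cM}\sum_{x\in\cH}e_p(tmx^{-1})$. If $N(\lambda)$ is the number of representations of $\lambda$, then by orthogonality $N(\lambda)=\tfrac1p\sum_{t}e_p(-t\lambda)S(t)^2$, whence, isolating $t=0$ and bounding the rest trivially,
\[
N(\lambda)\ \ge\ \frac{M^2H^2}{p}-\frac1p\sum_{t\neq0}|S(t)|^2\ =\ \frac{2M^2H^2}{p}-E,\qquad E:=\frac1p\sum_{t}|S(t)|^2,
\]
and a further application of orthogonality identifies $E$ with $\#\{(m_1,m_2,x_1,x_2)\in\cM^2\times\cH^2:\ m_1x_2\equiv m_2x_1\bmod p\}$. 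Hence it suffices to prove that, for all but $O(\pi(X)X^{-0.01\varepsilon})$ primes $p\le X$, one has $E<2M^2H^2/p$ for every $\cM\subseteq\F_p^{*}$ of size $M=\lfloor p^{1/2}\rfloor$; equivalently (by multiplicative orthogonality), $\tfrac1{p-1}\sum_{\chi}\bigl|\sum_{m\in\cM}\chi(m)\bigr|^2\bigl|\sum_{x\le H}\chi(x)\bigr|^2<2M^2H^2/p$, with the principal character supplying the main term $M^2H^2/(p-1)$.

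To estimate $E$, group the quadruples by the ratio $a=m_1m_2^{-1}$: $E=\sum_{a}\nu(a)r(a)$, where $\nu(a)=\#\{(m_1,m_2)\in\cM^2:m_1\equiv am_2\}$ (so $\sum_a\nu(a)=M^2$) and $r(a)=\#\{(x_1,x_2)\in\cH^2:x_1\equiv ax_2\}$. Now $r(a)=\#(L_a\cap\cH^2)$ for the lattice $L_a=\{(u,v)\in\Z^2:u\equiv av\bmod p\}$ of covolume $p$, so the standard two-dimensional lattice-point estimate gives $r(a)\ll H^2/p+H/W(a)+1$, where $W(a)$ is the height $\max(|u|,|v|)$ of a shortest nonzero vector of $L_a$ (equivalently, the smallest height of a rational $\equiv a\bmod p$). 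Summing and using $M^2=p=o(M^2H^2/p)$, the task reduces to
\[
\Sigma:=\sum_{a\in\F_p^{*}}\frac{\nu(a)}{W(a)}\ =\ o\!\left(\frac{M^2H}{p}\right)\ =\ o\!\left(p^{1/2}(\log p)^{6+\varepsilon}\right).
\]
For a ``generic'' $\cM$ the masses $\nu(a)$ are spread out; since only $\ll R^2$ values of $a$ have $W(a)\le R$, one gets $\Sigma\ll p^{1/2}\log p$ immediately, and the slack $(\log p)^{6+\varepsilon}$ built into $H$ is precisely the room available for error terms.

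The real work is to bound $\Sigma$ for an arbitrary, possibly very structured $\cM$, and I would dichotomize on the multiplicative energy $\mathsf{E}_{\times}(\cM)=\sum_a\nu(a)^2$. If $\mathsf{E}_{\times}(\cM)\le M^2(\log p)^{7}$, a dyadic decomposition over the scale $W=W(a)$ succeeds. The key point is that for $W\le W_1:=p^{1/2}(\log p)^{-6}$ one has $\sum_{a:\,W(a)\le W}\nu(a)\ll\mathsf{E}_{\times}(\cM)^{1/2}\,\mathsf{E}_{\times}([1,W])^{1/2}$ by Cauchy--Schwarz, and since $W^2<p$ there is no wrap-around, so the multiplicative energy of $[1,W]$ is a plain divisor sum, $\mathsf{E}_{\times}([1,W])=\#\{(a,b,c,d)\in[1,W]^4:ad=bc\}\ll W^2(\log W)^3$; thus $\sum_{a:\,W(a)\le W}\nu(a)\ll MW(\log p)^{5}$. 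Summing $W^{-1}\sum_{W(a)\asymp W}\nu(a)$ over the $O(\log p)$ dyadic scales below $W_1$, and bounding the contribution of $W(a)>W_1$ trivially by $M^2/W_1$, gives $\Sigma\ll M(\log p)^{6}+p^{1/2}(\log p)^{6}=o(p^{1/2}(\log p)^{6+\varepsilon})$ --- and the exponent $6$ on $H$ is exactly what this accounting produces ($\tfrac32$ from the cube in the divisor sum, $1$ from the harmonic sum over scales, $\tfrac72$ from the energy threshold, all consistent with the choice $W_1=p^{1/2}(\log p)^{-6}$). This case holds for every prime $p$.

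The hard case, and the reason the conclusion is only for almost all $p$, is $\mathsf{E}_{\times}(\cM)>M^2(\log p)^{7}$. Then $\cM$ is multiplicatively structured: by a Balog--Szemer\'edi--Gowers / Pl\"unnecke type argument, a positive proportion of $\cM$ lies in boundedly many cosets of a small approximate subgroup, so $\cM$ correlates with a genuine subgroup $G\le\F_p^{*}$ with $|G|\le M$ (or with a generalized geometric progression). For such $\cM$ the divisor bound above breaks, because such a structure can be \emph{concentrated} in a short interval and thereby resonate with $\cH$: the trivial bound $\mathsf{E}_{\times}(\cM)\le M^3$ only yields $E(\cM,\cH)\ll p^{5/4+o(1)}$, missing the target $E<2M^2H^2/p=p^{1+o(1)}$ by a factor $p^{1/4}$. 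The way out is to show that, for all but a sparse set of primes, $\F_p^{*}$ contains no such concentrated structure --- quantitatively, $\sum_{g\in G}1/W(g)\ll(\log p)^{6}$ for every subgroup $G$ with $|G|\le p^{1/2}$ (and likewise for the relevant approximate subgroups and geometric progressions), equivalently $\#\{(x_1,x_2)\in\cH^2:x_1/x_2\in G\}\ll|G|H^2/p$, equivalently $\sum_{\chi\in G^{\perp}\setminus\{\chi_0\}}\bigl|\sum_{x\le H}\chi(x)\bigr|^2\ll H^2$. Controlling the number of ``exceptional'' primes (those admitting such a concentration) by $O(X^{1-0.01\varepsilon})$ is where one invokes mean-value / large-sieve estimates for $\sum_{p\le X}\sum_{\chi\bmod p}\bigl|\sum_{x\le N}\chi(x)\bigr|^{2k}$ with $N\asymp p^{1/2}$: taking a fixed but sufficiently large $k$ converts the polynomial excess that a bad prime must exhibit into a genuine power saving, which is where the exponent $0.01\varepsilon$ comes from. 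I expect this last step --- reducing the structured-$\cM$ case to a statement purely about the multiplicative geometry of $\F_p^{*}$ and then averaging over $p$ --- to be the technical core; the circle-method reduction, the lattice estimate, the divisor bounds, and the emergence of the power $6+\varepsilon$ are all routine.
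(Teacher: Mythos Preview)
Your reduction to the energy $E$ and the treatment of Case~1 (low multiplicative energy) are correct, but Case~2 is not a proof: you invoke Balog--Szemer\'edi--Gowers to pass from $\mathsf{E}_\times(\cM)>M^2(\log p)^7$ to approximate-subgroup structure, and then assert that for almost all $p$ no (approximate) subgroup of size $\le p^{1/2}$ can concentrate on small heights, citing unspecified ``mean-value / large-sieve estimates''. Neither step is carried out, and the second one is genuinely nontrivial: you must control \emph{all} such structures simultaneously for a given $p$, and it is not clear which moment or sieve inequality delivers this with the required uniformity and the exponent $0.01\varepsilon$. Even in the cleanest test case $\cM=G$ a genuine subgroup of order $\approx p^{1/2}$, your target becomes $\sum_{g\in G}1/W(g)=o((\log p)^{6+\varepsilon})$, which amounts to equidistribution of $G$ against the height function --- plausible, but not something a large-sieve bound on $\sum_{p}\sum_{\chi}|\sum_{x\le H}\chi(x)|^{2k}$ yields directly. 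You flag this yourself as ``the technical core''; as written it is a gap.

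The paper avoids the dichotomy entirely by a different and much shorter mechanism. It first replaces $\cM$ by $\cD=\cM\cdot\{1,\dots,\lfloor 0.01p^{1/2}\rfloor\}^{-1}$, which by Lemma~\ref{lem:product small interval} satisfies $|\cD|\gg p/\log^2 p$ \emph{regardless of the structure of $\cM$}; this single step absorbs the ``structured $\cM$'' difficulty that forces your dichotomy. Then it sets $\cG=\cD\cdot\{q\ \text{prime}:q\le (\log p)^{6+\varepsilon}\}^{-1}\subseteq\cM\cdot\cH^{-1}$ and shows $|\cG|=(1+o(1))p$ (hence $\cG+\cG=\F_p$) via \emph{multiplicative} characters: writing the emptiness of $\{d\equiv qy:y\in\F_p^*\setminus\cG\}$ through characters and applying Cauchy--Schwarz reduces everything to the single input $\bigl|\sum_{q\le Q}\chi(q)\bigr|\le\pi(Q)(\log X)^{-1-0.1\varepsilon}$ for all nonprincipal $\chi\bmod p$. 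This is exactly Lemma~\ref{lem:charestsumprime} (a known result on very short character sums over primes), and both the ``almost all $p$'' restriction and the exponent $6+\varepsilon$ come straight from there, not from any energy dichotomy or large-sieve moment.
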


Let us now consider the  binary problem with ratios of two intervals. First of all, we observe that if $N<p^{1/2}$, then all irreducible fractions
$$
\frac{m}{x},\quad 1\le m, x \le N, 
$$
are distinct modulo $p$. Furthermore, the number of such fractions is asymptotically 
equal to $6N^2/\pi^2$, for the best known error term see~\cite[Chapter~V, Section~5, Equation~(12)]{Walf}.
Hence, if $N=\lfloor cp^{1/2}\rfloor$, where
$c>\pi/\sqrt{12}$, then 
$$
\#\left \{\frac{m}{x}\bmod p:~ 1\le m, \ x \le N\right \}>p/2
$$
and by the pigeon-hole principle we   get the representability of any integer $\lambda$ in the form
$$
\frac{m_1}{x_1}+\frac{m_2}{x_2}\equiv \lambda \bmod p,
$$
with 
$$
1\le m_1,m_2,x_1,x_2\le \lfloor\sqrt{(\pi^2/12 +\varepsilon)p}\rfloor. 
$$ 

Note that, generally speaking, the sizes of the intervals can not be taken $o(p^{1/2})$.
Indeed, it is easy to see that the congruence
$$
\frac{m_1}{x_1}+\frac{m_2}{x_2}\equiv -1\bmod p
$$
has no solutions in positive integers $m_1, m_2, x_1,x_2$ below $\sqrt{p/3}$. Moreover, the congruence
$$
\frac{m_1}{x_1}+\frac{m_2}{x_2}\equiv 0\bmod p
$$
has no solutions in positive integers $m_1, m_2, x_1,x_2$ below $\sqrt{p/2}$.

We now consider the case  of two arbitrary intervals. More precisely,  let $\cI$ and $\cJ$ be  subsets of $\F_p^*$ consisting of $N$ consecutive integers. Then, it follows   from the aforementioned works~\cite{GG1, GK1} that the representability holds when $N>cp^{1/2}$ for some suitable constant $c$.

Therefore, it is interesting to find the best possible constant $c$ in the problem of binary congruences with ratio of intervals. Here, following the argument of~\cite{GG1, GK1}, we show that for arbitrary intervals $\cI$ and $\cJ$ of length
$\lfloor cp^{1/2}\rfloor$ one can take $c=\sqrt{8}$.

\begin{theorem}
\label{thm:ratio of two intervals}
Let $\cI$ and $\cJ$ be subsets of $\F_p^*$ consisting on $\lfloor\sqrt{8p}\rfloor$ consecutive integers modulo $p$. Then any integer $\lambda$ can be represented in the form
$$
\frac{m_1}{x_1}+\frac{m_2}{x_2}\equiv \lambda \bmod p,\quad m_1, m_2\in \cI,\quad x_1, x_2\in\cJ.
$$
\end{theorem}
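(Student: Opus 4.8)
The plan is to prove the (formally stronger) statement that there exist $x_1,x_2\in\cJ$ for which
$$
\frac{\cI}{x_1}+\frac{\cI}{x_2}:=\Bigl\{\frac{m_1}{x_1}+\frac{m_2}{x_2}:\ m_1,m_2\in\cI\Bigr\}=\F_p;
$$
once such a pair is found, every $\lambda$ is represented by using these two fixed denominators. Since $\frac{m_1}{x_1}+\frac{m_2}{x_2}=\frac{m_1x_2+m_2x_1}{x_1x_2}$, multiplying by $x_1x_2$ shows this is equivalent to $x_2\cI+x_1\cI=\F_p$, and, writing $\cI=\{a+1,\dots,a+N\}$ with $N=\lfloor\sqrt{8p}\rfloor$, the set $x_2\cI+x_1\cI$ is a translate of the image of the box $[1,N]^2$ under the linear map $(i,j)\mapsto x_2i+x_1j$.

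First I would record the covering criterion for this map. The reduction $(i,j)\mapsto x_2i+x_1j\bmod p$ is a surjection $\Z^2\to\F_p$ whose kernel is the lattice
$$
\Lambda=\Lambda(x_1,x_2)=\{(i,j)\in\Z^2:\ x_2i+x_1j\equiv0\bmod p\},\qquad\det\Lambda=p,
$$
so $x_2\cI+x_1\cI=\F_p$ if and only if $[1,N]^2$ meets every coset of $\Lambda$, i.e.\ $[1,N]^2+\Lambda=\Z^2$. Passing to a Minkowski–reduced basis and using $\lambda_1\lambda_2\asymp p$ for the successive minima, this covering holds as soon as $\lambda_1(\Lambda)\ge c\,p/N$ for a suitable absolute constant $c$. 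Hence the only pairs $(x_1,x_2)$ that can fail are those for which $\Lambda(x_1,x_2)$ has a nonzero vector $(i_0,j_0)$ with $i_0^2+j_0^2$ below an absolute multiple of $p$; such a vector means that $x_2/x_1\equiv-j_0/i_0\bmod p$ is a reduced fraction of bounded height, and I denote by $\mathcal{B}\subseteq\F_p$ the (symmetric) set of all such "bad" ratios.

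It remains to show that not every pair from $\cJ$ is bad, i.e.\ $\cJ/\cJ\not\subseteq\mathcal{B}$, and here I would argue by counting. Writing $\cJ=\{b+1,\dots,b+N\}$,
$$
\#\bigl\{(x_1,x_2)\in\cJ^2:\ x_2/x_1\in\mathcal{B}\bigr\}\le\sum_{u\in\mathcal{B}}\#\{x\in\cJ:\ ux\bmod p\in\cJ\}.
$$
Each inner term is the number of points of the lattice $\{(x,y):\ ux\equiv y\bmod p\}$ in an $N\times N$ box, which is $\le N^2/p+O\bigl(N/\lambda_1(u)+1\bigr)$, where $\lambda_1(u)$ is the first minimum of that lattice. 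Summing, and using $|\mathcal{B}|\ll p$ together with $\sum_{u\in\mathcal{B}}1/\lambda_1(u)\ll\sqrt p$ (split the $u$ by the dyadic size of $\lambda_1(u)$ and note the number of lattice directions of norm $\asymp T$ is $\ll T^2$), the total number of bad pairs is at most a fixed constant times $p$. The point is that with $N=\lfloor\sqrt{8p}\rfloor$ this constant is strictly below $N^2/p$, so the bad pairs cannot exhaust $\cJ^2$; any good pair $(x_1,x_2)$ then satisfies $x_2\cI+x_1\cI=\F_p$, which completes the proof.

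I expect the main obstacle to be the bookkeeping of constants: one must make the covering criterion explicit (the admissible $c$, hence the precise height cut defining $\mathcal{B}$), together with the lattice–point estimate and the bound on $|\mathcal{B}|$, and check that they combine so that the number of bad pairs stays strictly below $N^2=\lfloor\sqrt{8p}\rfloor^2$. This calibration is exactly what fixes the value $\sqrt8$, and it is also why there is a gap between $\sqrt8$ and the obstruction thresholds $\sqrt{1/3}$ and $\sqrt{1/2}$ recorded just before the statement; a sharper argument would require a more careful analysis of the bad set $\mathcal{B}$.
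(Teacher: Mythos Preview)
Your approach is genuinely different from the paper's, and the gap is exactly where you yourself locate it: the constants. The paper does \emph{not} look for a single good pair $(x_1,x_2)$. Instead it sets $\cW=\cI/\cJ$, shows $|\cW|>p/2$, and concludes $\cW+\cW=\F_p$ by pigeonhole. The bound $|\cW^c|<p/2$ comes from a short exponential–sum calculation: for each $t\in\cW^c$ the congruence $m_1+m_2\equiv(x_1+x_2)t$ with $m_1\in[L+1,L+N]$, $m_2\in[1,N]$, $x_1\in[K+1,K+N]$, $x_2\in[1,N]$ and $N=\lfloor\sqrt{2p}\rfloor$ has no solution; writing this via additive characters and using Cauchy--Schwarz plus Parseval gives $N^4|\cW^c|\le (pN-N^2)\cdot pN$, hence $|\cW^c|\le p(p-N)/N^2<p/2$ as soon as $N^2>2p-2N$. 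The constant $\sqrt{8}$ drops out of an exact inequality, with no implied constants anywhere.

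By contrast, every step of your outline carries an absolute constant that must be tracked: (i) the Minkowski--reduced--basis covering criterion $\lambda_1(\Lambda)\ge c\,p/N\Rightarrow[1,N]^2+\Lambda=\Z^2$ needs an explicit admissible $c$ (the easy route via the $\ell^\infty$ covering radius and $\lambda_1^\infty\lambda_2^\infty\le p$ gives roughly $c\approx 2\sqrt 2$, while a sharper continued-fraction analysis gives $c\approx\sqrt2$); (ii) the size of $\cB$ is then $\approx \frac{3}{\pi}(cp/N)^2$, so the main term $|\cB|\,N^2/p\approx\frac{3}{\pi}c^2 p$; (iii) the error terms $N\sum_{u\in\cB}1/\lambda_1(u)$ and $|\cB|$ are each of order $p$ as well, with their own constants. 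To win you need the sum of all of these to be strictly below $N^2\approx 8p$. You have not carried out this calibration, and with the naive covering constant the main term alone is already borderline; the error terms (which are genuinely of the same order $p$, not lower order) make it very doubtful that this route reaches $\sqrt 8$ without substantial extra input. Since the theorem's entire content is the explicit constant $\sqrt 8$ (the existence of \emph{some} constant is already in the cited earlier work), the proposal as it stands does not prove the stated result.
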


\subsection{Notation and conventions}
We recall
that the notations $U = O(V)$,  $U \ll V$ and  $V \gg U$  are
all equivalent to the statement that $|U| \le c V$ holds
with some constant $c> 0$. 

Any implied constants in symbols $O$, $\ll$
and $\gg$ may occasionally, where obvious, depend on the
integer parameters $\ell$ and $s,$ the real parameter $\varepsilon >0$ and the fixed constant $c$, and are absolute otherwise.
 
Finally, given two quantities $U$ and $V>0$, we write 
 $U= o(V)$  and $U = V^{o(1)}$ to mean that $|U| \le \psi(V)V$ 
 and  $|U| \le V^{\psi(V)}$, respectively, for some function $\psi$ 
 such that $\psi(V) \to 0$ as $V \to \infty$.

We also use $U\lesssim  V$ to mean that $|U|< V p^{o(1)}$.

Given a set $\cX$, we use $|\cX|$ to denote its cardinality.

For a positive integer $N$ as usual $\pi(N)$ denotes the number  of primes $p \le N$, while for a 
real $z$ and a prime $p$ we use the notation $\ep(z) = \exp(2\pi i z/p)$.

\section{Auxiliary lemmas}

We recall that $s$ is a fixed positive integer constant, $\cM\subseteq \F_p^*$ with
$M=|\cM|<p^{1/2}$ and
$$
\cH =\{1,2,\ldots, H\}\bmod p.
$$

\begin{lemma}
\label{lem:product small interval}
Assume that 
$$
M=|\cM| < p^{1/2}, \qquad  H = |\cH|<p^{1/2}.
$$
Then for the cardinality of the set
$$
\cM\cdot\cH^{-s} =\{m h^{-s} \bmod p:~m\in\cM,\ h\in\cH\},
$$ 
we have the bound
$$
|\cM\cdot\cH^{-s}| \gg \min \{M H/\log p, \, H^2/\log^2p\}.
$$
\end{lemma}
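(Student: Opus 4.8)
The natural strategy is to lower-bound the cardinality of $\cA := \cM\cdot\cH^{-s}$ via a second-moment (Cauchy--Schwarz) argument combined with a multiplicative energy estimate. Write $r(t)$ for the number of pairs $(m,h)\in\cM\times\cH$ with $mh^{-s}\equiv t\bmod p$. Then $\sum_t r(t)=MH$, so by Cauchy--Schwarz
$$
|\cA| \ge \frac{(MH)^2}{\sum_t r(t)^2}.
$$
The denominator $\sum_t r(t)^2$ is precisely the number of quadruples $(m_1,m_2,h_1,h_2)\in\cM^2\times\cH^2$ with $m_1 h_2^{s}\equiv m_2 h_1^{s}\bmod p$. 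Since $\cM\subseteq\F_p^*$, for each fixed pair $(h_1,h_2)$ and each $m_1$ the value $m_2\equiv m_1 h_2^s h_1^{-s}$ is determined, and it lies in $\cM$ for at most $\min(|\cM|,\ldots)$ choices; more usefully, the count of quadruples equals $\sum_{h_1,h_2} \#\{(m_1,m_2)\in\cM^2: m_1/m_2\equiv (h_1/h_2)^s\}$. So everything reduces to understanding how often the ratio $(h_1/h_2)^s$, as $h_1,h_2$ range over $\cH$, can coincide with a ratio $m_1/m_2$ from $\cM$.

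The key input is a bound on the number of solutions of $h_1^{s} m_2 \equiv h_2^{s} m_1 \bmod p$ with $h_i\le H$, $m_i\in\cM$. I would split the count according to the value $u:=m_1/m_2 \bmod p$ (there are at most $M^2$ such values, counted with multiplicity $\le M$ for the "diagonal-like" contribution $m_1=m_2$, giving $u=1$). For $u=1$ we need $h_1^s\equiv h_2^s$; since $H<p^{1/2}$ and $s$ is fixed, $h_1^s\equiv h_2^s \bmod p$ with $1\le h_1,h_2\le H$ forces (for $p$ large) either $h_1=h_2$ or $h_1^s - h_2^s$ to be a nonzero multiple of $p$ that is nonetheless small — a standard argument shows the number of such pairs is $\ll H \log p$ (the divisor-type bound: $h_1/h_2$ must be a fixed $s$-th root of unity mod $p$, and counting pairs with a given ratio in $[1,H]^2$ costs a factor $\log p$ via the hyperbola $h_1 h_2' = h_2 h_1'$ trick, or directly Farey-fraction counting). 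For $u\ne 1$, I would invoke the classical bound on points of an interval-box on a fixed multiplicative curve: the number of $(h_1,h_2)\in\cH^2$ with $(h_1/h_2)^s \equiv u$ is $\ll H^{o(1)}$ for each fixed $u$ — but that is too weak summed over $M^2$ values; instead one wants, summed over all $u\ne 1$ arising from $\cM$, the total is $\ll M^2 \cdot (\text{something} \cdot H^{1+o(1)}/p \cdot \ldots)$. Actually the clean route: the full quadruple count is $\ll MH\log p + M^2 H^{2}/p \cdot (\log p)$-type; since $M,H<p^{1/2}$ the "main term" $M^2H^2/p$ is below $MH$, and the honest bound one proves is
$$
\sum_t r(t)^2 \ll MH\log p + H^2 \log^2 p .
$$
Plugging this into Cauchy--Schwarz yields exactly $|\cA|\gg \min\{MH/\log p,\ H^2/\log^2 p\}$.

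The main obstacle is establishing this energy bound $\sum_t r(t)^2 \ll MH\log p + H^2\log^2 p$ cleanly for arbitrary $\cM$. The $MH\log p$ term (from $u=1$, i.e.\ $m_1=m_2$) is the routine divisor/Farey argument. The genuinely delicate piece is bounding the contribution of $u\ne 1$: here one must control $\#\{(h_1,h_2,m_1,m_2): h_1^s m_2 = h_2^s m_1 \text{ in }\F_p,\ u:=m_1/m_2\ne 1\}$. I would fix the pair $(h_1,h_2)$ with $h_1\ne h_2$ — there are $\le H^2$ of them — and for each such pair the equation $m_1 \equiv m_2 (h_1/h_2)^s$ has at most $\min(M, \#\{m\in\cM: m(h_1/h_2)^s\in\cM\})$ solutions; summing trivially gives $H^2 M$, which is worse than needed. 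The trick to save the extra factor and replace one $M$ by $\log p$: instead fix $m_1,m_2$ with $m_1\ne m_2$ and count $(h_1,h_2)$ with $h_1^s/h_2^s$ equal to the fixed residue $m_1/m_2\ne 1$; since $H^2<p$, this is counting an interval-box on a fixed multiplicative curve of degree $s$, and one shows via the standard argument (e.g.\ using that such a curve meets a box $[1,H]^2$ in $\ll \log p$ points, or via Stepanov/Weil-type or elementary lattice reasoning for the specific curve $X^s = c Y^s$) that the count is $\ll \log p$ for each of the $\le M^2$ admissible pairs, but also that the number of admissible pairs $(m_1,m_2)$ (those for which $(m_1/m_2)$ is an $s$-th power of a rational with small height) is $\ll H^2/\log p$, not $M^2$ — because each such ratio "uses up" a pair $(h_1,h_2)$. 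This double-counting is the crux, and it is exactly why the second term in the bound is $H^2\log^2 p$ rather than $M^2$-dependent. Once this is in hand, the lemma follows immediately.

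\begin{proof}[Proof sketch, to be expanded]
Let $r(t)=\#\{(m,h)\in\cM\times\cH:~mh^{-s}\equiv t\}$, so $\sum_t r(t)=MH$ and, by the Cauchy--Schwarz inequality,
$$
|\cM\cdot\cH^{-s}| \ge \frac{M^2H^2}{\sum_t r(t)^2}.
$$
The quantity $\sum_t r(t)^2$ counts quadruples $(m_1,m_2,h_1,h_2)\in\cM^2\times\cH^2$ with $m_1 h_2^{s}\equiv m_2 h_1^{s}\bmod p$. Splitting according to whether $m_1=m_2$, and using the bound $\#\{(h_1,h_2)\in\cH^2:~h_1^s\equiv h_2^s\}\ll H\log p$ together with the bound $\#\{(h_1,h_2)\in\cH^2:~(h_1/h_2)^s\equiv c\}\ll \log p$ for each fixed $c\ne 1$ (valid since $H<p^{1/2}$), a double-counting argument over the admissible ratios yields
$$
\sum_t r(t)^2 \ll MH\log p + H^2\log^2 p.
$$
Combining the two displays gives the claimed bound.
\end{proof}
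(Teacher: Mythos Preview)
Your overall strategy (Cauchy--Schwarz plus an energy bound) matches the paper's, but the execution has a genuine gap. The pointwise bound you assert,
\[
\#\{(h_1,h_2)\in\cH^2:\ (h_1/h_2)^s\equiv c\}\ll \log p \qquad (c\ne 1),
\]
is simply false. Take $s=1$ and $c=2$: every pair $(2k,k)$ with $1\le k\le H/2$ is a solution, so the count is $\lfloor H/2\rfloor$, not $O(\log p)$. More generally, whenever $c\equiv a/b$ with small coprime $a,b$ the count is $\lfloor H/\max(a,b)\rfloor$, which can be of order $H$. Your ``double-counting over admissible ratios'' is too vague to repair this, and in fact the energy bound $\sum_t r(t)^2\ll MH\log p + H^2\log^2 p$ you claim would, even if it held, only deliver $|\cM\cdot\cH^{-s}|\gg M^2/\log^2 p$ in the regime $M<H\log p$, which falls short of the stated $\min\{MH/\log p,\,H^2/\log^2 p\}$ whenever $M$ is appreciably smaller than $H$.

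The idea you are missing is to \emph{restrict first to the primes in $\cH$}. Let $\cR$ be the set of primes $q\le H$, so $R=|\cR|\gg H/\log p$. For distinct primes $q_1\ne q_2$ in $\cR$ the residue $q_1/q_2 \bmod p$ determines the pair uniquely: if also $q_1'/q_2'\equiv q_1/q_2$ then $q_1 q_2' = q_1' q_2$ as integers (both sides lie below $p$ since $H<p^{1/2}$), and primality forces $q_1=q_1'$, $q_2=q_2'$. Hence for each fixed $c\ne 1$ the count over \emph{prime} pairs really is at most $s$, yielding the clean energy bound $J\ll M^2+MR$ and therefore
\[
|\cM\cdot\cH^{-s}| \ge |\cM\cdot\cR^{-s}| \ge \frac{(MR)^2}{J} \gg \min\{MR,\ R^2\},
\]
which is precisely the lemma after inserting $R\gg H/\log p$. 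Passing to primes is exactly what converts your false pointwise $O(\log p)$ claim into a true $O_s(1)$ bound.
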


\begin{proof}
Let $J$ be the number of solutions of the congruence
$$
m_1q_1^{-s}\equiv m_2q_2^{-s}\bmod p, \quad m_1, m_2\in\cM,\quad q_1,q_2\in \cR,
$$
where $\cR$ is the set of primes from $\cH$. We have
\begin{equation}
\label{eq: R}
R=|\cR|\gg \frac{H}{\log p.} 
\end{equation}
We claim that,
$$
J\le s(M^2 + MR).
$$
If $m_1=m_2$ then $q_1^s\equiv q_2^s\bmod p$ and we have at most $sM R$ possibilities in this case.

Let $m_1\not=m_2$ be fixed. Denoting $\lambda=m_1m_2^{-1}$, we have
$$
(q_1q_2^{-1})^s\equiv \lambda\bmod p,\quad q_1 \ne q_2.
$$ 
It follows that, for any fixed $\lambda$, there are at most $s$ different values for $q_1q_2^{-1}$ modulo $p$. But for each such value of $q_1 q_2^{-1}$ the primes $q_1$ and $q_2$ are defined uniquely.
Indeed, if there are two such pairs of primes
$q_1 \ne q_2$ and $r_1 \ne r_2$,  then
$$
q_1 r_2\equiv q_2 r_1\bmod p. 
$$
Since both  sides are positive integers less than $p$, this congruence becomes an equality $q_1 r_2 = q_2 r_1$, implying
$q_1=r_1$ and  $q_2=r_2$.

Now we use the  standard relation between the number of solutions and the cardinalities, which follows from the Cauchy-Schwarz 
inequality:
$$
\sum_{m \in \cM} \sum_{q \in \cR}  1  
= \sum_{\lambda\in \cM\cdot\cR^{-s}}
 \dsum_{\substack{m \in \cM, \ q \in \cR\\
mq^{-s}\equiv \lambda \bmod p}}  1  \le  \sqrt {|\cM\cdot\cR^{-s}| J}, 
$$
where $\cR^{-s} = \{q^{-s}:~q \in \cR\}$.  We now derive that
$$
|\cM\cdot\cH^{-s}|\ge |\cM\cdot\cR^{-s}|\ge \frac{s M^2R^2}{s(M^2+MR)}\gg
\min\{R^2, MR\}, 
$$
which together with~\eqref{eq: R} concludes the proof. 
\end{proof}

We also need the following result given by~\cite[Theorem~1.2]{GSh1}.

\begin{lemma}
\label{lem: KloostFrac}  Let  $s$ be a fixed positive integer and let integers $K$ and $H$ be such that
$$
0<K+1<K+H<p.
$$
Then for any fixed positive integer $\ell$ the following bound holds
$$
\sum_{m\in\cM}\left|\sum_{K+1\le  x\le K+H}\e_p(amx^{-s})\right|\lesssim  HM\(\frac{p}{MH^{2\ell/(\ell+1)}}+\frac{1}{M}\)^{1/(2\ell)}.
$$
\end{lemma}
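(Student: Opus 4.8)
This estimate is quoted from~\cite[Theorem~1.2]{GSh1}; to reprove it directly I would combine H\"older's inequality, a completion over $m$, and an additive-energy bound for the inverse $s$-th powers. Write
$$
S_m=\sum_{K+1\le x\le K+H}\e_p(amx^{-s}),
$$
so that we must bound $\sum_{m\in\cM}|S_m|$ (we take $p\nmid a$, as is implicit in the statement). First I would apply H\"older's inequality with exponent $2\ell$ to extract the dependence on $M=|\cM|$:
$$
\sum_{m\in\cM}|S_m|\le M^{1-1/(2\ell)}\left(\sum_{m\in\cM}|S_m|^{2\ell}\right)^{1/(2\ell)}.
$$
Since $|S_m|^{2\ell}\ge0$, enlarging the range of $m$ to all residues only increases the moment, and expanding the $2\ell$-th power and using orthogonality in $m$ (here $p\nmid a$ is used) gives
$$
\sum_{m\in\cM}|S_m|^{2\ell}\le\sum_{m=0}^{p-1}|S_m|^{2\ell}=p\,I,
$$
where $I$ is the number of solutions of
$$
x_1^{-s}+\cdots+x_\ell^{-s}\equiv y_1^{-s}+\cdots+y_\ell^{-s}\bmod p,\qquad x_i,y_j\in\{K+1,\ldots,K+H\}.
$$

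Everything thus reduces to the $\ell$-fold additive energy of the set $\cA=\{x^{-s}\bmod p:~K+1\le x\le K+H\}$, namely to the claim
$$
I\lesssim H^{2\ell^2/(\ell+1)}+\frac{H^{2\ell}}{p}.
$$
Granting this, substituting back into the chain above yields
$$
\sum_{m\in\cM}|S_m|\lesssim M^{1-1/(2\ell)}\left(pH^{2\ell^2/(\ell+1)}+H^{2\ell}\right)^{1/(2\ell)},
$$
and since $2\ell-2\ell/(\ell+1)=2\ell^2/(\ell+1)$, pulling $H^{2\ell}$ out of the bracket (it becomes the factor $H$) and rewriting $M^{-1/(2\ell)}=(1/M)^{1/(2\ell)}$ back inside reproduces exactly the asserted form $HM\(p/(MH^{2\ell/(\ell+1)})+1/M\)^{1/(2\ell)}$. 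So the analytic skeleton is routine.

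The main obstacle is the energy bound $I\lesssim H^{2\ell^2/(\ell+1)}+H^{2\ell}/p$. Writing $I=p^{-1}\sum_{t\bmod p}|S(t)|^{2\ell}$ with $S(t)=\sum_{K+1\le x\le K+H}\e_p(tx^{-s})$, the frequency $t=0$ contributes the main term $H^{2\ell}/p$, so the heart of the matter is $\sum_{t\ne0}|S(t)|^{2\ell}\lesssim pH^{2\ell^2/(\ell+1)}$. The trivial count $I\le sH^{2\ell-1}$---for fixed $x_2,\ldots,x_\ell,y_1,\ldots,y_\ell$ the value $x_1^{-s}$, and hence $x_1$ up to $s$ choices, is determined---is too weak, as $2\ell-1>2\ell^2/(\ell+1)$ once $\ell\ge2$, so one must show that the sums $x_1^{-s}+\cdots+x_\ell^{-s}$ do not concentrate. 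This is the technical core of~\cite{GSh1}: completing each short sum $S(t)$ and invoking Weil's bound for the complete sums $\sum_{u\bmod p}\e_p(tu^{-s}+bu)$ controls the large values of $S(t)$, and a dyadic large-values argument balancing the diagonal contribution $H^\ell$ against the frequency of those large values is what produces the exponent $2\ell^2/(\ell+1)$. Once this counting bound is in hand, the reassembly above is purely formal, and this distribution estimate is the one genuinely hard step.
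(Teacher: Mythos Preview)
The paper does not actually prove this lemma: it is recorded verbatim as \cite[Theorem~1.2]{GSh1} with no argument given. Your proposal recognises this and supplies a sketch of the proof, and that sketch is correct in outline. The H\"older step, the positivity extension to a full residue system in $m$, and the orthogonality identity $\sum_{m\bmod p}|S_m|^{2\ell}=pI$ (valid precisely because $p\nmid a$) are all standard and exactly how one expects the argument in \cite{GSh1} to begin; your algebraic unpacking back to the stated shape is also right, since $2\ell-2\ell^2/(\ell+1)=2\ell/(\ell+1)$.

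You correctly isolate the only substantive ingredient: the additive-energy bound
\[
I\lesssim H^{2\ell^2/(\ell+1)}+H^{2\ell}/p
\]
for the $\ell$-fold energy of $\{x^{-s}\bmod p:\ K<x\le K+H\}$. Your description of how this is obtained --- completion plus the Weil bound for the twisted Kloosterman-type sums $\sum_{u}\e_p(tu^{-s}+bu)$ feeding into a large-values/dyadic argument --- is the right mechanism and matches the approach of \cite{GSh1}. So nothing is missing conceptually; the only part you have not carried out in detail is that energy estimate, and you flag it accurately as the technical core imported from the cited paper.
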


Finally,  we recall the following statement from~\cite[Theorem~3]{Gar1}; 
we refer to~\cite[Chapter~3]{IwKow} or~\cite[Section~4.2]{MonVau} 
for a background on characters.

\begin{lemma}
\label{lem:charestsumprime}
Let  $c$ be a positive constant and let 
$$
Q= (\log X)^c,  \qquad   \Delta > 0, \qquad  \Delta  \to 0 \quad \text{as}\  X\to +\infty.
$$ 
Then  for all, but
$$
O\(X^{2c^{-1}-4(\log\Delta)(\log
Q)^{-1}}e^{-\frac{\log X}{100c\log\log X}}\)
$$
positive integers $k \le X$, the inequality
$$
\left|\sum_{\substack{q\le Q\\q~\text{prime}}}\chi(q)\right|\le \pi(Q)\Delta
$$
holds for any  primitive multiplicative character $\chi$ modulo $k$. 
\end{lemma}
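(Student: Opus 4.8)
The plan is to estimate a high even moment of the prime character sum over all moduli at once, using multiplicativity to linearise the power and the large sieve to control the average over primitive characters. Write $P=\pi(Q)$ and $\cP=\{q\le Q:\ q\ \text{prime}\}$, and for a character $\chi$ modulo $k$ set $S(\chi)=\sum_{q\in\cP}\chi(q)$, so the claim is that $|S(\chi)|\le P\Delta$ for every primitive $\chi$, for all but few $k\le X$. Call $k$ \emph{exceptional} if some primitive $\chi$ modulo $k$ violates this, and let $B$ be the number of exceptional $k\le X$. For an integer $\ell\ge 1$ to be chosen, Chebyshev's inequality gives
\[
B\,(P\Delta)^{2\ell}\le \sum_{k\le X}\ \sum_{\chi\bmod k}^{*}\,|S(\chi)|^{2\ell},
\]
where the star restricts to primitive characters; thus it suffices to bound the right-hand moment.

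Since $\chi$ is completely multiplicative, $S(\chi)^{\ell}=\sum_{n\le Q^{\ell}}r_{\ell}(n)\chi(n)$, where $r_{\ell}(n)=\#\{(q_1,\dots,q_{\ell})\in\cP^{\ell}:\ q_1\cdots q_{\ell}=n\}$ counts ordered factorisations of $n$ into $\ell$ primes from $\cP$. Hence $|S(\chi)|^{2\ell}=\bigl|\sum_{n}r_{\ell}(n)\chi(n)\bigr|^{2}$, and the multiplicative large sieve inequality (see~\cite{IwKow,MonVau}), applied with the coefficients $r_\ell(n)$ supported on $n\le Q^{\ell}$, yields
\[
\sum_{k\le X}\ \sum_{\chi\bmod k}^{*}\,|S(\chi)|^{2\ell}\le \bigl(X^{2}+Q^{\ell}\bigr)\sum_{n\le Q^{\ell}}r_{\ell}(n)^{2}.
\]
The coefficient sum is exactly the number of pairs of ordered $\ell$-tuples of primes from $\cP$ with equal product, so by unique factorisation it counts pairs sharing a multiset, giving $\sum_{n}r_{\ell}(n)^{2}\le \ell!\,P^{\ell}$.

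I would then choose $\ell=\fl{2\log X/\log Q}$, the largest value for which the large-sieve length is admissible, $Q^{\ell}\le X^{2}$; this makes $X^{2}+Q^{\ell}\le 2X^{2}$, so the moment is $\le 2\ell!\,P^{\ell}X^{2}$ and
\[
B\le \frac{2\ell!\,P^{\ell}X^{2}}{(P\Delta)^{2\ell}}=\frac{2\,\ell!\,X^{2}}{P^{\ell}\,\Delta^{2\ell}}.
\]
Now one evaluates the right-hand side asymptotically. With this $\ell$ one has $\log(\ell!)=\ell\log\ell-\ell+O(\log\ell)$ by Stirling and $\log P=\log Q-\log\log Q+o(1)$ by the prime number theorem; the leading terms combine so that $\ell!\approx X^{2/c}$ while $X^{2}$ cancels against $P^{-\ell}\approx X^{-2}$, and $\Delta^{-2\ell}$ contributes $X^{4\log(1/\Delta)/(c\log\log X)}$, reproducing the stated exponent $2c^{-1}-4(\log\Delta)(\log Q)^{-1}$. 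A careful bookkeeping shows that the $\log\log\log X$ corrections coming from $\log\ell$ and from the error in $\log P$ cancel exactly, while the $-\ell$ term of Stirling survives as a genuinely negative contribution of order $\log X/\log\log X$; keeping the constant deliberately wasteful turns this into the extra saving $e^{-\log X/(100c\log\log X)}$.

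The delicate point, and the step I expect to take the most care, is this quantitative optimisation rather than any single inequality: one must verify that the large-sieve admissibility threshold $Q^{\ell}\le X^{2}$ falls exactly at the moment order that minimises $\ell!\,X^{2}P^{-\ell}\Delta^{-2\ell}$, and that the competing factors $\ell!\sim X^{2/c}$, $P^{-\ell}\sim X^{-2}$ and $\Delta^{-2\ell}$ assemble into precisely the claimed exponent with a surviving saving. It is worth noting that the more elementary route---expanding the moment by orthogonality and splitting into diagonal and off-diagonal congruences $q_1\cdots q_\ell\equiv q_{\ell+1}\cdots q_{2\ell}\pmod k$---breaks down here: at the optimal $\ell$ the off-diagonal terms, bounded only by the divisor function, swamp the diagonal once $c>2$, which is exactly the range needed in the application. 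Packaging the diagonal and off-diagonal together through the large sieve, whose constant $X^{2}+Q^{\ell}$ absorbs both, is what makes the argument go through for every fixed $c$.
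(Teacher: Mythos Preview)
The paper does not prove this lemma at all: it merely quotes it as \cite[Theorem~3]{Gar1}. Your argument --- raise $S(\chi)$ to a high even power $2\ell$, linearise via complete multiplicativity, feed the coefficients $r_\ell(n)$ into the multiplicative large sieve, bound the diagonal by $\sum_n r_\ell(n)^2\le \ell!\,P^{\ell}$, and optimise with $\ell=\lfloor 2\log X/\log Q\rfloor$ --- is correct and is in fact exactly the approach used in the cited source. Your bookkeeping claim also checks out: writing $\ell=2\log X/(c\log\log X)+O(1)$ and using $\log P=c\log\log X-\log\log\log X-\log c+o(1)$, the $\ell\log\log\log X$ and $\ell\log c$ contributions in $\log(\ell!)$ and $\ell\log P$ cancel, leaving
\[
\log(\ell!)+2\log X-\ell\log P=\frac{2\log X}{c}-\frac{2(1-\log 2)\log X}{c\log\log X}+O\!\left(\frac{\log X}{(\log\log X)^2}\right),
\]
and since $2(1-\log 2)\approx 0.614>1/100$ this comfortably yields the stated saving $e^{-\log X/(100c\log\log X)}$. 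So your proposal supplies a complete proof where the paper gives only a reference, and by the same method as that reference.
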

We note that for a prime modulus $k=p$ all non-trivial characters are primitive.

\section{Proof of the theorems}

\subsection{Proof of Theorem~\ref{thm:three summands}}
Let 
$$
\cD= \cM\cdot\cH^{-s}.
$$
By Lemma~\ref{lem:product small interval}, we have
$$
|\cD|\gg \min \{M H/\log p, \, H^2/\log^2p\} = MH/\log p.
$$
It suffices to prove the solubility of the congruence
\begin{equation}
\label{eqn:d1+d2+frac m/xs}
d_1+d_2 + \frac{m}{x^s}\equiv \lambda \bmod p,
\end{equation}
with
$$
d_1, d_2\in \cD,\quad  m\in \cM, \quad x\in\cH.
$$
Using the orthogonality of exponential functions and expressing the number $T$ of solutions of~\eqref{eqn:d1+d2+frac m/xs} 
via exponential sums, we see that 
$$
T =\sum_{d_1\in\cD}\sum_{d_2\in \cD}\sum_{m\in\cM}\sum_{x \in\cH}\frac{1}{p}\sum_{a=0}^{p-1} \e_p\(a(d_1+d_2+mx^{-s}-\lambda)\). 
$$  
Next, changing the order of summation and separating the term corresponding to $a=0$,  we obtain 
\begin{equation}
\label{eqn:T=Main + Error}
\begin{split}
T& =\frac{1}{p}\sum_{a=0}^{p-1}\sum_{d_1\in\cD}\sum_{d_2\in \cD}\sum_{m\in\cM}\sum_{x\in\cH}\e_p\(a(d_1+d_2+mx^{-s}-\lambda)\)\\
& =\frac{|D|^2HM}{p} +O(E),
\end{split} 
\end{equation}
where
\begin{align*}
E& = \frac{1}{p}\left|\sum_{a=1}^{p-1}\sum_{d_1\in\cD}\sum_{d_2\in \cD}\sum_{m\in\cM}\sum_{x\in\cH}\e_p(a(d_1+d_2+mx^{-s}-\lambda))\right| \\ 
& \le  \frac{1}{p}
\sum_{a=1}^{p-1}\left|\sum_{d\in\cD}\e_p(ad)\right|^2\left|\sum_{m\in\cM}\sum_{x\in\cH}\e_p(amx^{-s})\right|.
\end{align*}

From Lemma~\ref{lem: KloostFrac} with $\ell=3$ we have that
$$
\left|\sum_{m\in\cM}\sum_{x\in\cH}\e_p(amx^{-s})\right|\lesssim  HM \(\frac{p}{MH^{3/2}}+\frac{1}{M}\)^{1/6}.
$$
Observing that
$$
\frac{p}{MH^{3/2}}+\frac{1}{M}\ll \frac{p}{MH^{3/2} }
\mand \sum_{a=0}^{p-1}\left|\sum_{d\in\cD}\e_p(ad)\right|^2 = p |D|, 
$$
we get
\begin{align*}
E&\lesssim  HM\frac{p^{1/6}}{p M^{1/6}H^{1/4}}\sum_{a=0}^{p-1}\left|\sum_{d\in\cD}\e_p(ad)\right|^2 \\
& = HM\frac{p^{1/6}}{ M^{1/6}H^{1/4}} |D|=\frac{|D|^2HM}{p}\times\Delta, 
\end{align*}
where
$$
\Delta =\frac{p^{7/6}}{|D|M^{1/6}H^{1/4}}<p^{-0.1\varepsilon}.
$$
Inserting this  into~\eqref{eqn:T=Main + Error}, we obtain that
$$
T=\(1+ o(1)\)\frac{|D|^2HM}{p},
$$
and the result follows.

\subsection{Proof of Theorem~\ref{thm:two summands}}

In Lemma~\ref{lem:charestsumprime} we take 
$$
c=6+\varepsilon,\quad  Q=(\log X)^{6+\varepsilon},\quad \Delta=(\log X)^{-1-0.1\varepsilon}.
$$  
Then we have at most 
$O(X^{1-0.6\varepsilon/(6+\varepsilon)})$ exceptions for $k\le X$. Hence,
for $\pi(X)\(1+O(X^{-0.05\varepsilon})\)$ primes $p$ with $X^{1/2}< p\le  X$ the inequality
\begin{equation}
\label{eqn:char sums our case}
\left|\sum_{q\le Q}\chi(q)\right|\le \pi(Q)(\log X)^{-1-0.1\varepsilon}
\end{equation}
holds for any non-principal character $\chi$ modulo $p$.

Let 
$$
\cD = \left \{m/h:~m\in \cM,\ h\in \left\{1, \ldots, \fl{0.01p^{1/2}}\right\}\right\}.
$$
Note that if $X^{1/2}<p\le X$,  $h\le [0.01p^{1/2}]$ and $q\le (\log X)^{6+\varepsilon}$, then $hq\le p^{1/2}(\log p)^{6+\varepsilon}$. 

We now prove that if~\eqref{eqn:char sums our case} holds for some $X^{1/2}<p\le X$, then for the cardinality of the set 
$$
\cG  =\{d/q\bmod p :~d\in \cD, \  q\le (\log X)^{6+\varepsilon}\}
$$
we have
\begin{equation}
\label{eqn: G large}
|\cG| = \(1+o(1)\)p.
\end{equation}
This   implies that 
$$
\cG+\cG =\F_p,
$$
which together with $\cG\subseteq \cM/\cH\bmod p$, yields  the desired result.

By Lemma~\ref{lem:product small interval} we have that
\begin{equation}
\label{eqn:lower bound cD}
|\cD|\gg \frac{p}{\log^2 p}.
\end{equation}
Let $\cG^{c}=\F_p^*\setminus \cG$.   Thus~\eqref{eqn: G large} is equivalent to $|\cG^{c}|=o(p)$.

By the definition of $\cG$,  there is no solutions to the congruence
$$
d\equiv q y \bmod p,\qquad d\in\cD, \ q\le Q,\ y\in \cG^c. 
$$ 
Hence, using the orthogonality of  multiplicative characters,  
expressing the number of solutions (which is zero) via sums of characters $\chi$ modulo $p$,  we get that
$$
0= \sum_{d\in\cD}\sum_{q\le Q}\sum_{y\in \cG^c} \frac{1}{p-1}\sum_{\chi \bmod p} \chi\(d^{-1}qy\).
$$
Changing the order of summation and separating the term that corresponds to the principal character $\chi=\chi_0$,
we derive
$$
\pi(Q)\cdot |\cD|\cdot |\cG^c|\le 
\sum_{\chi\not=\chi_0}\left|\sum_{q\le Q}\chi(q)\right|\left|\sum_{y\in \cG^c}\chi(y)\right|\left|\sum_{d\in \cD}\chi(d^{-1})\right|.
$$
Using~\eqref{lem:charestsumprime} to estimate the sum over $q\le Q$, and then following the standard procedure with application of the Cauchy-Schwarz inequality, we get that
\begin{equation}
\label{eqn:principal char separate}
\pi(Q)\cdot |\cD|\cdot |\cG^c|\le \pi(Q)(\log X)^{-1-0.1\varepsilon}T_1^{1/2}T_2^{1/2}, 
\end{equation}
where
$$
T_1=\sum_{\chi \bmod p} \left|\sum_{y\in \cG^c}\chi(y)\right|^2=(p-1)|\cG^c|
$$
and
$$
T_2= \sum_{\chi \bmod p} \left|\sum_{d\in \cD}\chi(d^{-1})\right|^2=(p-1)|\cD|.
$$ 
Therefore, from~\eqref{eqn:principal char separate}, we get that
$$
|\cD||\cG^c|\le (p-1)(\log X)^{-1-0.1\varepsilon}|\cD|^{1/2}|\cG^c|^{1/2}.
$$
Thus,
$$
|\cG^c|\le \frac{p^2}{|\cD|(\log X)^{2+0.2\varepsilon}}.
$$
This, together with~\eqref{eqn:lower bound cD}, implies that
$$
|\cG^c|\ll \frac{p}{(\log X)^{0.2\varepsilon}}=o(p),
$$
thus, establishing~\eqref{eqn: G large} and 
finishing the proof of our Theorem~\ref{thm:two summands}.

\begin{rem} It is well known, that under the Generalised Riemann Hypothesis (GRH)
a version of  Lemma~\ref{lem:charestsumprime} holds for all moduli, 
see~\cite[Equation~(13.21)]{MonVau}. Thus under the GRH we have a version 
of Theorem~\ref{thm:two summands} which holds for all sufficiently large $p$. 
\end{rem} 

\subsection{Proof of Theorem~\ref{thm:ratio of two intervals}}
Let 
$$
\cI=\left\{L+1,\ldots, L+\lfloor\sqrt{8p}\rfloor\right\} \quad \text{and}\quad  \cJ=\left\{K+1,\ldots, K+\lfloor\sqrt{8p}\rfloor\right\}.
$$
Denote
$$
\cW= \{m/x\bmod p:~m\in\cI,\ x\in\cJ\},
$$
and let $\cW^c=\F_p\setminus \cW$.
It suffices to prove that $|\cW^c|<p/2$. Indeed,  in this case $|\cW|>p/2$
and the claim then follows  from the pigeon-hole principle.

Let $N =\lfloor\sqrt{2p}\rfloor$ and note that $2N\le \lfloor\sqrt{8p}\rfloor$. Clearly, for $t\in\cW^c$ there is no solutions to the congruence
$$
m_1+m_2\equiv (x_1+x_2)t\bmod p, 
$$
with  positive integers $m_1,m_2,x_1,x_2$ satisfying
$$
m_1\in [L+1,  L+N], \quad x_1\in [K+1, K+N],\quad m_2\le N,\quad x_2\le N.
$$
As in the prove of  Theorem~\ref{thm:three summands},
 we write the number of solutions of this congruence (which is zero) in terms of exponential  sums and see that
$$
0=\frac{1}{p}\sum_{a=0}^{p-1}\sum_{m_1=L+1}^{L+N} \sum_{m_2=1}^N\sum_{x_1=K+1}^{K+N} \sum_{x_2=1}^N\sum_{t\in\cW^c}
\e_p\(a(m_1+m_2 -(x_1+x_2)t)\).
$$
Separating the term corresponding to $a=0$ and following the standard procedure, we derive 
\begin{equation}
\label{eqn:ratio intervals a=0}
N^4|\cW^c|\le \sum_{a=1}^{p-1}S_1(a)S_2(a)\(\sum_{t\in \cW^c}T_1(at)T_2(at)\), 
\end{equation}
where
$$
S_1(a)=\left|\sum_{m_1=L+1}^{L+N} \e_p(am_1)\right|, \qquad S_2=\left|\sum_{m_2=1}^Ne_p(am_2\right|,
$$
and
$$
T_1(at)=\left|\sum_{x_1=K+1}^{K+N} \e_p(-atx_1)\right|,\qquad T_2(at)=\left|\sum_{x_2=1}^N\e_p(-atx_2)\right|.
$$
Now we observe that
\[
\sum_{t\in \cW^c}T_1(at)T_2(at)\le \sum_{t=0}^{p-1}T_1(at)T_2(at)=\sum_{t=0}^{p-1}T_1(t)T_2(t).
\]
Therefore, applying the Cauchy-Schwarz inequality, we  obtain
\[
\sum_{t\in \cW^c}T_1(at)T_2(at) \le  \(\sum_{t=0}^{p-1}T_1^2(t)\)^{1/2}\(\sum_{t=0}^{p-1}T_2^2(t)\)^{1/2}=pN.
\]  
Analogously, we have
$$
\sum_{a=1}^{p-1}S_1(a)S_2(a)= \sum_{a=0}^{p-1}S_1(a)S_2(a) - N^2\le pN-N^2.
$$
Hence, inserting these estimates into~\eqref{eqn:ratio intervals a=0}, we derive
$$
|\cW^c|\le \frac{(p-N)p}{N^2}=\frac{p}{2}\cdot\frac{2p-2N}{N^2}.
$$
Since $N^2+2N = (N+1)^2- 1 > 2p -1$ and $N^2+2N\not = 2p$ 
we see  that 
$$
N^2>2p-2N,
$$
implying $|\cW^c|<p/2$.

\end{document}